 \documentclass[final,3p,times]{elsarticle}

\usepackage{amssymb}
\usepackage{amsmath}
\usepackage{amsthm}

\newtheorem{theorem}{Theorem}
\newtheorem{lemma}[theorem]{Lemma}
\newtheorem{cor}[theorem]{Corollary}

\usepackage{mathtools}
\mathtoolsset{showonlyrefs}
\usepackage{physics}
\usepackage[colorlinks=true]{hyperref}

\journal{Journal xxx}

\begin{document}

\begin{frontmatter}



\title{Well-posedness of the weakly singular Burton--Miller equation for Helmholtz transmission problems}


\author{Yasuhiro Matsumoto} 

\affiliation{organization={Center for Information Infrastructure, Institute of Science Tokyo},
            addressline={2-12-1-I8-21, Ookayama, Meguro-ku},
            city={Tokyo},
            postcode={152-8550},
            country={Japan}}

\begin{abstract}
Although various boundary integral formulations are available for the Helmholtz transmission problem, the weakly singular Burton--Miller (BM) equation is promising because it is well-suited for the Nyström discretization.
 Moreover, unlike other formulations such as the PMCHWT or Müller equations, its fictitious eigenvalues do not coincide with eigenvalues of a different transmission problem. This paper rigorously shows that the weakly singular BM equation is well-posed.
\end{abstract}



\begin{keyword}
Boundary integral equations \sep Burton--Miller method \sep Transmission problems \sep Helmholtz equation



\end{keyword}

\end{frontmatter}



\section{Introduction}
The Helmholtz transmission problem \cite{kress1977transmission} is important in both physics and engineering applications because it is associated with scattering problems.
 When handling an unbounded domain, boundary integral formulations are promising since their solutions naturally satisfy the radiation condition.
 Furthermore, boundary integral equations (BIEs) can reduce the spatial dimension by one.
These features enable the development of well-founded and efficient numerical methods.

Various BIEs exist for the same transmission problem.
 A typical example is the PMCHWT (Poggio--Miller--Chang--Harrington--Wu--Tsai) equation \cite{chew2001fast},
 which is derived based on the Green's representation theorem and the boundary conditions of the transmission problem.
This equation involves a boundary layer operator with a hypersingular kernel, which makes its numerical treatment difficult.
On the other hand, the Müller equation \cite{muller1969foundations} can be constructed by utilizing the constraint in BIEs that integral representations of direct potentials are zero within the complementary domain.
Since the Müller equation contains at most weakly singular kernels, it is frequently employed to develop numerical methods based on the Nystr\"om discretization.
 The variant of the Müller equation using indirect potentials is also known \cite{ROKHLIN1983257}.
However, formulations of BIEs often suffer from fictitious eigenvalues that are unrelated to the original boundary value problem \cite{CHEN1998529}.
 Although the aforementioned formulations ensure the existence of a unique solution at positive real frequencies,
 quasi-mode resonances, which correspond to eigenvalues with a small imaginary part \cite{stefanov2000resonances},
 adversely affect the accuracy of numerical methods, even at positive real frequencies \cite{misawa2017boundary}.
To avoid numerical breakdown due to quasi-mode resonances, augmented formulations for the PMCHWT and Müller equations have been proposed \cite{hiptmair2022spurious}.
 However, solving these augmented equations is not necessarily computationally efficient.

It has been reported that the PMCHWT and Müller equations possess fictitious eigenvalues associated with transmission problems that differ from the original;
 these eigenvalues induce quasi-mode resonances even though the original transmission problem does not exhibit them \cite{misawa2017boundary}.
 Therefore, the use of a single integral equation \cite{kleinman1988single} or a Burton–Miller (BM) equation \cite{burton1971application} is appealing, as their fictitious eigenvalues are unrelated to transmission problems.
However, in the single integral equation, dealing with products of boundary layer operators is not straightforward when using the Nyström method.
It should also be noted that the conventional BM equation for transmission problems involves a hypersingular operator.

This paper proposes a variant of the BM-type equation that involves at most weakly singular kernels,
 referred to as the weakly singular BM equation, and investigates its well-posedness.
 This equation is expected to be well-suited for numerical computation because its fictitious eigenvalues are unrelated to any transmission problems, and it avoids kernels stronger than a weak singularity.
 While it is equivalent to the formulation in \cite{MISAWA201912-191201}, that previous study only qualitatively described the uniqueness of the solution through the investigation of the distribution of its fictitious eigenvalues.
 This paper provides a rigorous proof of the well-posedness of the weakly singular BM equation in an appropriate function space.

\section{Transmission problems} \label{sec:problem}
Let $\Omega_0$ and $\Omega_1$ be open unbounded and bounded subsets of $\mathbb R^d$ for $d = 2, 3$, respectively, such that
$\overline \Omega_0 \cup \Omega_1 = \Omega_0 \cup \overline \Omega_1 = \mathbb{R}^d$ and $\Omega_0 \cap \Omega_1 = \emptyset$.
Assume $\Omega_0$ is connected.
Let $\Gamma = \partial \Omega_0 = \partial \Omega_1$ be the interface between the two regions,
which is at least of class $C^2$.
Let $\varepsilon_i > 0$, $\omega > 0$, and $k_i = \omega \sqrt{\varepsilon_i} > 0$ be constants,
representing the material constant, angular frequency, and wavenumber, respectively, in each $\Omega_{i}$ for $i=0, 1$.
Let $u^{\mathrm{in}}$ be the incident wave field in $\Omega_0$
that is an analytic solution of the Helmholtz equation with wavenumber $k_0$ in $\mathbb{R}^d$.
Let us consider the following transmission problem:
find the scattered fields $u_j^{\mathrm{sc}} \in C^2(\Omega_j) \cap C^1({\overline \Omega_j})$ for $j = 0, 1$ such that
\begin{align}
  \begin{dcases}
    (\Delta + k_j^2) \, u_j^{\mathrm{sc}} (x) = 0, \quad x \in \Omega_j, \quad j = 0, 1, 
    \\
    u_0^{\mathrm{sc}} (x) + u^{\mathrm{in}} (x) = u_1^{\mathrm{sc}} (x) \quad (= u(x)), \quad x \in \Gamma,
    \\
    \frac{1}{\varepsilon_0} \pdv{(u_0^{\mathrm{sc}} + u_0^{\mathrm{in}})}{\nu}\qty(x)
    = \frac{1}{\varepsilon_1} \pdv{u_1^{\mathrm{sc}}}{\nu}\qty(x)
    \quad (= q (x)), \quad x \in \Gamma,
    \\
    u_0^{\mathrm{sc}}(x) \text{ satisfies the outgoing radiation condition},
  \end{dcases}
  \label{eq:problem}
\end{align}
where the unit normal vector $\nu (x)$ on $\Gamma$ is directed into $\Omega_0$ and $\pdv{}{\nu}$ stands for the normal derivative.
We note that the two functions $u, q$ on the boundary $\Gamma$ are defined in boundary conditions of \eqref{eq:problem}.

As shown in Corollary \ref{thm:solvability}, the regularity $u_j^{\mathrm{sc}} \in C^2(\Omega_j) \cap C^1({\overline \Omega_j})$ for $j = 0, 1$ ensures
 that $u \in C^{1, \beta}(\Gamma)$ and $q \in C^{0, \beta}(\Gamma)$ for $0 < \beta \le 1$.
 This guarantees that the interpolation operator $P_n$ via trigonometric interpolation with $n$ nodes satisfies
 $|| P_n q - q ||_\infty \to 0$ as $n \to \infty$ \cite[Theorem 11.6]{kress2014linear}.
 Thus, it is convenient to solve the BIEs using the Nyström method.
 This regularity requirement always holds in scattering problems with a $C^2$ boundary since $u^\mathrm{in}$ is analytic in $\mathbb{R}^d$.

\section{The weakly singular Burton--Miller equation}
For all $x\in\Gamma$ and a function $\varphi : \Gamma \to \mathbb{C}$, we define the following boundary integral operators:
\begin{align}
  (S_k\varphi)(x) &= \int_{\Gamma} G_k(x,y)\varphi(y) \dd s(y), \quad 
  &&(D_k\varphi)(x) = \int_{\Gamma} \frac{\partial G_k (x,y)}{\partial \nu(y)}\varphi(y) \dd s(y),
  \\
  (D_k^* \varphi)(x) &= \int_{\Gamma} \frac{\partial G_k (x,y)}{\partial \nu(x)}\varphi(y) \dd s(y), \quad 
  &&(N_k\varphi)(x) = \frac{\partial}{\partial\nu(x)}\int_{\Gamma} \frac{\partial G_k(x,y)}{\partial \nu(y)}\varphi(y) \dd s(y),
\end{align}
where $G_k(x, y)$ is the fundamental solution of the Helmholtz equation with wavenumber $k$.
Let $C(\Gamma)$ be the space of continuous complex-valued functions on $\Gamma$ equipped with the maximum norm.

The standard BM equation for the problem \eqref{eq:problem} is given as,
  \begin{equation}
  \mqty(
  D_{k_0} - I/2 + \alpha N_{k_0} & - \varepsilon_0 S_{k_0} - \alpha \varepsilon_0 (D_{k_0}^* + I/2) \\
  D_{k_1} + I/2 & -  \varepsilon_1 S_{k_1} \\
  )
  \mqty(u \\ q)
  = \mqty(0 \\ -u^{\mathrm{in}} - \alpha \pdv{u^{\mathrm{in}}}{\nu}), \label{eq:bm}
  \end{equation}
where $I$ is an identity operator and $\alpha = i/k_0$ is a Burton--Miller constant. Here, $i$ is the imaginary unit.
Recently, this equation has been demonstrated to be effective for numerical computations \cite{tomoyasu2026point, matsumoto2026fast}.
 However, since it involves the hypersingular operator $N_{k_0}$, challenges remain in applying the Nyström method in three-dimensional problems.
To address this issue, the weakly singular BM equation, expressed as,
\begin{equation}
  \mqty(
  D_{k_1} + I/2 & -\varepsilon_1 S_{k_1} 
  \\
  D_{k_0} - I/2 + \alpha \qty(N_{k_0} - N_{k_1}) & - \varepsilon_0 S_{k_0} -\alpha \qty(\varepsilon_0 D_{k_0}^* - \varepsilon_1 D_{k_1}^*) - \frac{\alpha(\varepsilon_0 + \varepsilon_1)}{2}I \\
  )
\mqty(
u \\
q
)
\mqty(
0 \\
-u^{\mathrm{in}} - \alpha \pdv{u^{\mathrm{in}}}{\nu}
), \label{eq:wsbm}
\end{equation}
is promising.
We can write this system compactly as $A x = b$, where
the linear operator $A : X \to X$, the solution $x \in X$, and the right-hand side $b \in X$ 
on the product space $X = C(\Gamma) \times C(\Gamma)$
are defined as,
\begin{equation}
  A =
  \mqty(
  D_{k_1} + I/2 & -\varepsilon_1 S_{k_1} 
  \\
  D_{k_0} - I/2 + \alpha \qty(N_{k_0} - N_{k_1}) & - \varepsilon_0 S_{k_0} -\alpha \qty(\varepsilon_0 D_{k_0}^* - \varepsilon_1 D_{k_1}^*) - \frac{\alpha(\varepsilon_0 + \varepsilon_1)}{2}I \\
  ),
\quad
x = 
\mqty(
u \\
q
), \quad
b = 
\mqty(
0 \\
-u^{\mathrm{in}} - \alpha \pdv{u^{\mathrm{in}}}{\nu}
),
\end{equation}
respectively.

As preparation for the proof, we recall the following results.
\begin{theorem}[{\cite[Theorem 3.4]{colton2013inverse}; 
 \cite[Theorem 2.31]{colton1983integral}}] \label{thm:mapping}
  Let $k, k^\prime > 0$ with $k \neq k^\prime$ and $0 < \beta \le 1$.
  Let the boundary $\Gamma = \partial \Omega_1$ be of class $C^2$.
Then, the operators $S_k$, $D_k$, $D_k^*$, and $N_k - N_{k^\prime}$ are compact from $C(\Gamma)$ into $C(\Gamma)$.
Furthermore, $S_k$, $D_k$, $D_k^*$, and $N_k - N_{k^\prime}$ are bounded from $C(\Gamma)$ into $C^{0,\beta}(\Gamma)$,
 and $S_k$ and $D_k$ are bounded from $C^{0,\beta}(\Gamma)$ into $C^{1,\beta}(\Gamma)$.
\end{theorem}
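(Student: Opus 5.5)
This is a recalled classical result, so the plan is to reconstruct the standard argument from kernel estimates and the classical potential-theory mapping properties, as in \cite{colton2013inverse, colton1983integral}.

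\textbf{Compactness and H\"older bounds for $S_k$, $D_k$, $D_k^*$.} The kernels $G_k(x,y)$, $\partial_{\nu(y)}G_k(x,y)$ and $\partial_{\nu(x)}G_k(x,y)$ are weakly singular on a $C^2$ surface. They are $O(|\log|x-y||)$ or $O(|x-y|^{2-d})$ for $S_k$. For the double-layer kernels, $C^2$ regularity gives $|\nu(y)\cdot(x-y)| \le C|x-y|^2$, hence $O(|x-y|^{2-d})$. Weakly singular operators are compact on $C(\Gamma)$: approximate the kernel by continuous truncated kernels, which gives operators compact by Arzel\`a--Ascoli, with uniformly small operator-norm error. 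For the mapping $C(\Gamma)\to C^{0,\beta}(\Gamma)$, I will use the standard estimate
\[
|K(x_1,y)-K(x_2,y)| \le C|x_1-x_2|\,|x_1-y|^{1-d}
\quad\text{for } |x_1-y|\ge 2|x_1-x_2|.
\]
I will split the integral into a near part, a ball of radius $2|x_1-x_2|$, and a far part. The near part contributes $O(|x_1-x_2|\,|\log|x_1-x_2||)$, which is $O(|x_1-x_2|^\beta)$ for $\beta<1$. For $\beta=1$ in the statement, I would rely on the cited sources' conventions.

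\textbf{The operators $S_k$, $D_k$ from $C^{0,\beta}(\Gamma)$ into $C^{1,\beta}(\Gamma)$.} This is the classical Giraud/Günter result for $k=0$. I will extend it to the Helmholtz case by writing $G_k=G_0+(G_k-G_0)$. The difference has a smoother kernel, $O(|x-y|^{2-d}\log)$ after one derivative, so it maps continuous densities into $C^{1,\beta}$. For the Laplace part, I will differentiate tangentially. The tangential derivative of the single layer equals a Cauchy-type singular integral, which maps $C^{0,\beta}\to C^{0,\beta}$ by the Privalov-type theorem. For $D_0$ I will use the identity relating the tangential gradient of the double layer to single-layer operators applied to tangential derivatives. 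An alternative route is the Maue-type identity with integration by parts on $\Gamma$, after subtracting $\varphi(x)$ and using Gauss' formula for $D_0 1$.

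\textbf{The difference $N_k-N_{k'}$.} This is the step I expect to be the main obstacle. I will write out
\[
\partial_{\nu(x)}\partial_{\nu(y)}\bigl(G_k-G_{k'}\bigr)(x,y).
\]
The leading hypersingular terms cancel because $G_k-G_{k'}$ is smooth up to terms like $c(k^2-k'^2)|x-y|^2\log|x-y|$ in 2D, and $c(k-k')+O(|x-y|)$ in 3D. I will verify by expansion of the Hankel functions and exponentials that the resulting kernel is $O(\log|x-y|)$ in 2D and $O(|x-y|^{-1})$ in 3D. Its difference quotients must also satisfy the same $|x_1-x_2|\,|x_1-y|^{1-d}$-type estimate. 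Once that is established, compactness and the $C(\Gamma)\to C^{0,\beta}$ bound follow exactly as for the weakly singular operators in the first step.

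The careful 3D expansion of the second normal derivative, tracking the terms $\nu(x)\cdot\nu(y)$ and $(x-y)\cdot\nu$, is the delicate computation. I would first do it for the model expansion $e^{ik r}/r = 1/r + ik - k^2 r/2 + \dots$, where each term beyond $1/r$ has bounded or $O(r^{-1})$ second derivatives.
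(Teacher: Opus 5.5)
The paper gives no proof of this statement; it quotes Colton--Kress, so your reconstruction is measured against the standard argument. Most of it is sound. This includes compactness for weakly singular kernels, the near/far splitting for $C(\Gamma)\to C^{0,\beta}(\Gamma)$, and the smoothing remainder $G_k-G_0$. It also includes the kernel expansion for $N_k-N_{k'}$, which gives $\frac{k^2-k'^2}{8\pi}\frac{\nu(x)\cdot\nu(y)}{|x-y|}+O(1)$ in 3D and $O(\log|x-y|)$ in 2D. That step is actually routine, once you add that the same expansion identifies the boundary limit defining $(N_k-N_{k'})\varphi$ with the integral against that kernel.

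The genuine gap is $D_0\colon C^{0,\beta}(\Gamma)\to C^{1,\beta}(\Gamma)$. You propose two routes: the tangential gradient of the double layer written via single-layer operators on tangential derivatives, and the Maue identity with integration by parts on $\Gamma$. Both put a derivative on the density, and that derivative does not exist for $\varphi\in C^{0,\beta}(\Gamma)$. They yield at best $\|D_0\varphi\|_{C^{1,\beta}}\le C\|\varphi\|_{C^{1,\beta}}$. That is weaker than claimed, and it is not enough for Corollary~\ref{thm:solvability}, where $u$ is lifted from $C^{0,\beta}$ to $C^{1,\beta}$ through the first row.

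The missing idea is to differentiate the kernel, not the density. On a $C^2$ surface, $|\nu(y)\cdot(x-y)|\le C|x-y|^2$, so $k(x,y)=\partial G_0(x,y)/\partial\nu(y)$ is $O(|x-y|^{2-d})$, the same type as $G_0$. Its tangential gradient $\nabla_\Gamma k=(I-\nu(x)\nu(x)^\top)\nabla_x k$ is $O(|x-y|^{1-d})$. Using only that $\Gamma$ is $C^2$, one checks
\[
|\nabla_\Gamma k(x_1,y)-\nabla_\Gamma k(x_2,y)|\le C\,|x_1-x_2|\,|x_1-y|^{-d},\qquad |x_1-y|\ge 2|x_1-x_2|.
\]
Since $D_0 1$ is constant on $\Gamma$, you get $\nabla_\Gamma D_0\varphi(x_0)=\int_\Gamma\nabla_\Gamma k(x_0,y)\,(\varphi(y)-\varphi(x_0))\,ds(y)$. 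So your subtraction/Gauss device is the right one, but without the integration by parts. Now run the Privalov--Giraud near/far argument with the weight $|\varphi(y)-\varphi(x_0)|\le C|x_0-y|^\beta$. Together with the usual uniform bound on truncated integrals of $\nabla_\Gamma k$, this gives $\nabla_\Gamma D_0\varphi\in C^{0,\beta}(\Gamma)$.

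Second, deferring $\beta=1$ to ``the sources' conventions'' does not work. The classical results are for $0<\beta<1$, and the statement is false at $\beta=1$. On the unit circle, $S_0e^{in\theta}=e^{in\theta}/(2|n|)$ for $n\ne 0$, so $\partial_\theta S_0=-\tfrac{1}{2}H$, where $H$ is the conjugate-function operator. $H$ maps neither $C$ into $L^\infty$ nor $C^{0,1}$ into $C^{0,1}$, and $S_k-S_0$ is smoothing. Hence $S_k$ is bounded neither from $C(\Gamma)$ into $C^{0,1}(\Gamma)$ nor from $C^{0,1}(\Gamma)$ into $C^{1,1}(\Gamma)$. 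The logarithmic loss you found is genuine. You should state and prove the theorem for $0<\beta<1$ only, which is all the paper uses afterwards.
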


\begin{theorem}[{\cite[Corollary 1.20]{colton1983integral}}] \label{thm:bdd}
Let $Y$ be a normed space, $B : Y \to Y$ a compact linear operator,
 and $Z : Y \to Y$ a bounded linear operator that has a bounded inverse $Z^{-1}$.
Let $Z - B$ be injective.
Then, the inverse operator $(Z - B)^{-1}$ exists and is bounded.
\end{theorem}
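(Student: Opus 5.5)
The plan is to reduce the statement to the classical Riesz theory for operators of the form $I - K$ with $K$ compact, by factoring out the invertible operator $Z$.

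\emph{Step 1 (factorization).} Write
\begin{equation}
  Z - B = Z\,(I - K), \qquad K := Z^{-1} B .
\end{equation}
Since $Z^{-1}$ is bounded and $B$ is compact, $K$ is compact: it maps bounded sets into $Z^{-1}$-images of relatively compact sets, and these are relatively compact because $Z^{-1}$ is continuous. Because $Z$ is injective, $(I-K)\varphi = 0$ forces $(Z-B)\varphi = 0$, hence $\varphi = 0$. So $I - K$ is injective. Once $I-K$ is shown to have a bounded inverse on all of $Y$, the claim follows with
\begin{equation}
  (Z - B)^{-1} = (I-K)^{-1} Z^{-1},
\end{equation}
which is a composition of bounded operators.

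\emph{Step 2 (Riesz theory for $I-K$).} Set $L = I - K$. I would follow Riesz's argument, which works in a normed space without completeness.

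First I show that $L(Y)$ is closed. Take $L\varphi_n \to f$. The $\varphi_n$ are bounded: otherwise normalize to $\tilde\varphi_n = \varphi_n/\|\varphi_n\|$, so $L\tilde\varphi_n \to 0$. Compactness gives a subsequence with $K\tilde\varphi_n \to \psi$. Then $\tilde\varphi_n \to \psi$, so $\|\psi\| = 1$, while $L\psi = 0$. This contradicts injectivity. With the $\varphi_n$ bounded, pass to a subsequence with $K\varphi_n \to \chi$. Then $\varphi_n = L\varphi_n + K\varphi_n \to f + \chi =: \varphi$, and $L\varphi = f$ by continuity of $L$.

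Next I show the ranges $L^n(Y)$ are closed and eventually stationary. Each $L^n = I - K_n$ with $K_n$ compact, so the closedness argument applies to every power. If the inclusions $L^{n+1}(Y) \subsetneq L^n(Y)$ were all strict, the Riesz lemma would give unit vectors $\varphi_n \in L^n(Y)$ with $\operatorname{dist}(\varphi_n, L^{n+1}(Y)) \ge 1/2$. Then for $m > n$,
\begin{equation}
  K\varphi_n - K\varphi_m = \varphi_n - \bigl(L\varphi_n + \varphi_m - L\varphi_m\bigr),
\end{equation}
and the bracketed term lies in $L^{n+1}(Y)$. Hence $\|K\varphi_n - K\varphi_m\| \ge 1/2$, so $(K\varphi_n)$ has no convergent subsequence, contradicting compactness. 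Thus $L^{r+1}(Y) = L^r(Y)$ for some $r$.

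Surjectivity then follows. For $f \in Y$, $L^r f = L^{r+1} g$ for some $g$, so $L^r(f - Lg) = 0$. Since $L$ is injective, so is $L^r$, hence $f = Lg$.

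\emph{Step 3 (boundedness of $L^{-1}$).} Suppose $L^{-1}$ were unbounded. Then there are $f_n$ with $\|f_n\| \to 0$ and $\|L^{-1}f_n\| = 1$. The same compactness argument as in Step 2 produces $\psi$ with $\|\psi\| = 1$ and $L\psi = 0$, which is impossible. Hence $L^{-1}$ is bounded, and Step 1 completes the proof.

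\emph{Main obstacle.} The delicate point is the surjectivity in Step 2. Injectivity alone gives nothing without compactness, and since $Y$ need not be complete, the open mapping theorem is unavailable. That is why the range-stabilization argument via the Riesz lemma has to be carried out by hand, together with the closedness of each $L^n(Y)$.
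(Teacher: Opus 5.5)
Your proof is correct and is essentially the standard argument behind the cited result (the paper itself gives no proof and only cites Colton--Kress): write $Z - B = Z(I - Z^{-1}B)$ with $Z^{-1}B$ compact, then apply Riesz's theorem to $I - Z^{-1}B$. Your Steps 2--3 reprove that Riesz theorem in the injective case, and they do so validly in a normed space without completeness: closed range by the normalization argument, stabilization of $L^n(Y)$ via the Riesz lemma, surjectivity from injectivity of $L^r$, and boundedness of $L^{-1}$ by the same compactness argument.
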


\section{Uniqueness and existence of the solution}
For density functions $\varphi, \psi \in C(\Gamma)$, we define the following potential:
  \begin{align}
    U_{k_j}[\varphi, \psi](x) =  g(j) \qty{ \int_{\Gamma} \pdv{G_{k_j}(x, y)}{\nu(y)} \varphi(y) \dd s(y) - \varepsilon_j \int_{\Gamma} G_{k_j}(x, y) \psi(y) \dd s(y)}, \quad x \in \mathbb{R}^d \setminus \Gamma, \label{eq:potential}
  \end{align}
where the function $g: \{ 0, 1 \} \to \mathbb{Z}$ is given by $g(0) = 1$ and $g(1) = -1$.
To show that the weakly singular BM equation \eqref{eq:wsbm} is well-posed,
it suffices to prove that $A$ is injective and that its underlying bounded operator is invertible,
 which ensures that $A^{-1}$ exists and is bounded.
\begin{lemma} \label{thm:injective}
  Under the assumptions in Section \ref{sec:problem}, the linear operator $A$ is injective.
  In other words, for any $z = (\varphi, \psi)^\top$ with $\varphi, \psi \in C(\Gamma)$,
  the homogeneous equation $Az=0$, given by
  \begin{equation}
    \mqty(
    D_{k_1} + I/2 & -\varepsilon_1 S_{k_1} 
    \\
    D_{k_0} - I/2 + \alpha \qty(N_{k_0} - N_{k_1}) & - \varepsilon_0 S_{k_0} -\alpha \qty(\varepsilon_0 D_{k_0}^* - \varepsilon_1 D_{k_1}^*) - \frac{\alpha(\varepsilon_0 + \varepsilon_1)}{2}I \\
    )
  \mqty(
  \varphi \\
  \psi
  )
  =
  \mqty(
  0 \\
  0
  ), \label{eq:injec}
  \end{equation}
 has only the trivial solution.
\end{lemma}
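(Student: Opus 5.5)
The plan is to interpret a solution $z=(\varphi,\psi)^\top$ of \eqref{eq:injec} through the two potentials $U_{k_0}[\varphi,\psi]$ and $U_{k_1}[\varphi,\psi]$ from \eqref{eq:potential}, each defined on all of $\mathbb{R}^d\setminus\Gamma$. I will use the classical jump relations for densities in $C(\Gamma)$ and for a $C^2$ boundary, with $\nu$ pointing into $\Omega_0$ and $+$/$-$ denoting limits from $\Omega_0$/$\Omega_1$. The double layer has traces $D_k\varphi\pm\varphi/2$, and its normal derivative is continuous and equals $N_k\varphi$ whenever it exists. The single layer is continuous with trace $S_k\psi$, and its normal derivative has traces $D_k^*\psi\mp\psi/2$. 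Since $\varphi,\psi$ are only continuous, I will first use Theorem \ref{thm:mapping} to bootstrap regularity. The first row gives $\varphi = -2D_{k_1}\varphi+2\varepsilon_1 S_{k_1}\psi \in C^{0,\beta}$. The second row is second kind in $\psi$, because the coefficient of $\psi$ contains the nonzero multiple $-\alpha(\varepsilon_0+\varepsilon_1)/2$ of the identity. This gives $\psi\in C^{0,\beta}$, and the first row then gives $\varphi\in C^{1,\beta}$. With this regularity the normal derivatives of the double layers exist and the jump relations hold in the classical sense.

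\textbf{Step 1 (first row).} The first row of \eqref{eq:injec} says exactly that the $+$ trace of $-U_{k_1}[\varphi,\psi]$ vanishes. Restricted to $\Omega_0$, $U_{k_1}$ is a radiating solution of the Helmholtz equation with wavenumber $k_1>0$ and zero Dirichlet data. By uniqueness for the exterior Dirichlet problem (Rellich's lemma, with $\Omega_0$ connected), $U_{k_1}=0$ in $\Omega_0$. In particular its $+$ normal derivative vanishes:
\begin{equation}
N_{k_1}\varphi-\varepsilon_1\bigl(D_{k_1}^*\psi-\psi/2\bigr)=0 .
\end{equation}

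\textbf{Step 2 (second row).} I regroup the second row as
\begin{equation}
\bigl[U_{k_0}\bigr]_- + \alpha \Bigl[\pdv{U_{k_0}}{\nu}\Bigr]_- - \alpha\Bigl(N_{k_1}\varphi-\varepsilon_1 D_{k_1}^*\psi+\varepsilon_1\psi/2\Bigr)=0 .
\end{equation}
Here the first two terms are $D_{k_0}\varphi-\varphi/2-\varepsilon_0S_{k_0}\psi$ and $\alpha\bigl(N_{k_0}\varphi-\varepsilon_0D_{k_0}^*\psi-\varepsilon_0\psi/2\bigr)$. The last bracket is precisely the expression shown to vanish in Step 1. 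Hence $U:=U_{k_0}|_{\Omega_1}$ solves $(\Delta+k_0^2)U=0$ in $\Omega_1$ with the impedance condition $\partial_\nu U = -U/\alpha = ik_0U$ on $\Gamma$. Green's first identity on $\Omega_1$, whose outward normal is $\nu$, gives
\begin{equation}
\int_{\Omega_1}\bigl(|\nabla U|^2-k_0^2|U|^2\bigr)\dd x=\int_\Gamma \overline U\,\partial_\nu U\,\dd s = ik_0\int_\Gamma|U|^2\dd s .
\end{equation}
Taking imaginary parts yields $U=0$ on $\Gamma$, hence $\partial_\nu U=0$ on $\Gamma$. Green's representation formula in $\Omega_1$ then gives $U_{k_0}=0$ in $\Omega_1$.

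\textbf{Step 3 (homogeneous transmission problem).} Set $W_0=U_{k_0}|_{\Omega_0}$ and $W_1=U_{k_1}|_{\Omega_1}$. Since $U_{k_0}$ vanishes in $\Omega_1$ and $U_{k_1}$ vanishes in $\Omega_0$, the jump relations give the following traces:
\begin{equation}
W_0=\varphi,\qquad \partial_\nu W_0=\varepsilon_0\psi,\qquad W_1=\varphi,\qquad \partial_\nu W_1=\varepsilon_1\psi \quad\text{on }\Gamma .
\end{equation}
Here the sign factor $g(1)=-1$ in \eqref{eq:potential} is exactly what makes the $W_1$ traces come out with the same signs. Thus $(W_0,W_1)$ solves \eqref{eq:problem} with $u^{\mathrm{in}}=0$, and $W_0$ is radiating. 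By uniqueness for the transmission problem with $\varepsilon_j,k_j>0$, we get $W_0=W_1=0$. The standard argument applies Green's identity in $\Omega_1$ and in a large ball intersected with $\Omega_0$. The transmission conditions make the boundary terms combine into $\operatorname{Im}\int_{|x|=R}\overline{W_0}\,\partial_r W_0\,\dd s=0$, and Rellich's lemma then gives $W_0=0$; Holmgren or Cauchy data uniqueness then gives $W_1=0$. Hence $\varphi=W_0|_\Gamma=0$ and $\psi=\partial_\nu W_0/\varepsilon_0=0$.

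I expect the main obstacle to be the bookkeeping in Step 2. One must check that the $N_{k_0}-N_{k_1}$ and $\varepsilon_0D_{k_0}^*-\varepsilon_1D_{k_1}^*$ terms, together with the $-\alpha(\varepsilon_0+\varepsilon_1)/2$ identity part, split exactly into the interior impedance trace of $U_{k_0}$ plus the exterior Neumann trace of $U_{k_1}$. This splitting is what lets the hypersingular parts be handled individually, and it is where the regularity bootstrap is genuinely needed, since only the difference $N_{k_0}-N_{k_1}$ is defined on $C(\Gamma)$.
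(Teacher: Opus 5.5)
Your proof is correct and is essentially the paper's case (ii) argument: exterior Dirichlet uniqueness for $U_{k_1}$, interior impedance uniqueness for $U_{k_0}$, then the homogeneous transmission problem. You apply it uniformly in $\varepsilon_0,\varepsilon_1$, whereas the paper treats $\varepsilon_0=\varepsilon_1$ separately through the combined-potential equation; as its Corollary~\ref{thm:eigenvalues} itself notes, the potential argument covers that case too. Your preliminary bootstrap to $\psi\in C^{0,\beta}$ and $\varphi\in C^{1,\beta}$ adds rigor: it justifies splitting $N_{k_0}-N_{k_1}$ into the separate traces $N_{k_0}\varphi$ and $N_{k_1}\varphi$, which the paper's proof of the lemma uses without comment and only justifies later, in Corollary~\ref{thm:solvability}.
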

\begin{proof}
(i) Case $\varepsilon_0 = \varepsilon_1$:
  Substituting $\varepsilon_0 = \varepsilon_1$ in \eqref{eq:injec} yields the following equations:
  \begin{align}
    &D_{k_0} \varphi + (1/2) \varphi - \varepsilon_0 S_{k_0} \psi = 0, \label{eq:du-sq} \\
    &D_{k_0} \varphi - (1/2) \varphi - \varepsilon_0 S_{k_0} \psi - \alpha \varepsilon_0 \psi = 0.
  \end{align}
Subtracting the first equation from the second equation, we obtain the relation $\varphi = - \alpha \varepsilon_0 \psi$.
Substituting this into \eqref{eq:du-sq} yields
$\alpha \varepsilon_0 D_{k_0} \psi + (\alpha \varepsilon_0 /2) \psi + \varepsilon_0 S_{k_0} \psi = 0$.
Since $\varepsilon_0 > 0$, we have $(\alpha D_{k_0} + (\alpha /2) + S_{k_0}) \psi = 0$.
From the uniqueness for the solution of the combined potential formulation with respect to a positive real $\omega > 0$ \cite{colton1983integral},
we conclude that $\psi = 0$,
which also implies $\varphi = 0$ since $\varphi = - \alpha \varepsilon_0 \psi$.

(ii) Case $\varepsilon_0 \neq \varepsilon_1$.
A minor adaptation of \cite[Theorem 3.1]{matsumoto2025efficient} establishes the injectivity of $A$. 
Nevertheless, for the sake of continuity with Corollary \ref{thm:solvability} and \ref{thm:eigenvalues}, we present the details here.
  Let $f|_{\pm}$ be the limits of a function $f$, defined as, for $x \in \Gamma$, 
  $f|_{+} = \lim_{h \to 0_{+}} f(x + h\nu(x))$ and $f|_{-} = \lim_{h \to 0_{+}} f(x - h\nu(x))$.
  We have the following jump relations:
  \begin{align}
    U_{k_1}[\varphi, \psi]|_{\pm} &= -\qty(D_{k_1} \pm \tfrac{1}{2}) \varphi(x) + \varepsilon_{1} S_{k_1} \psi(x), \quad x \in \Gamma, \label{eq:potw_trace} \\
    \eval{\pdv{U_{k_1}[\varphi, \psi]}{\nu}}_{\pm} &= -N_{k_1} \varphi(x) + \varepsilon_{1} \qty(D^{*}_{k_1} \mp \tfrac{1}{2}) \psi(x), \quad x \in \Gamma, \label{eq:potw_dev_trace}
  \end{align}
  where the notation $\eval{\pdv{}{\nu}}_{\pm}$ is interpreted in the sense of a limit.
  From \eqref{eq:injec}, we have $U_{k_1}[\varphi, \psi](x) |_{+} = 0$.
  Then, $U_{k_1}[\varphi, \psi](x)$ is identically zero for $x \in \Omega_0$
  since the homogeneous exterior Dirichlet problem of the Helmholtz equation
  \begin{align}
    \begin{dcases}
      \Delta U_{k_1}[\varphi, \psi] + k_{1}^{2} U_{k_1}[\varphi, \psi] = 0, \quad x \in \Omega_0, \\
      U_{k_1}[\varphi, \psi]|_{+} = 0, \quad x \in \Gamma, \\
      U_{k_1}[\varphi, \psi](x) \text{ satisfies the outgoing radiation condition},
    \end{dcases}
    \label{eq:bvp_diriclet}
  \end{align}
  has only the trivial solution.
  Therefore, the following conditions hold:
  \begin{gather}
    U_{k_1}[\varphi, \psi]|_{+} = \eval{\pdv{U_{k_1}[\varphi, \psi]}{\nu}}_{+} = 0. \label{eq:zero_w_trace}
  \end{gather}

  Similarly, we have the following jump relations with respect to the potential $U_{k_0}[\varphi, \psi]$:
  \begin{align}
    U_{k_0}[\varphi, \psi]|_{\pm} &= \qty(D_{k_0} \pm \tfrac{1}{2}) \varphi(x) - \varepsilon_0 S_{k_0} \psi(x), \quad x \in \Gamma, \label{eq:potv_trace} \\
    \eval{\pdv{U_{k_0}[\varphi, \psi]}{\nu}}_{\pm} &= N_{k_0} \varphi(x) - \varepsilon_0 \qty(D^{*}_{k_0} \mp \tfrac{1}{2}) \psi(x), \quad x \in \Gamma. \label{eq:potv_dev_trace}
  \end{align}
  From \eqref{eq:injec},
  the linear combination satisfies $U_{k_0}[\varphi, \psi]|_{-} + \alpha \eval{\pdv{U_{k_0}[\varphi, \psi]}{\nu}}_{-} + \alpha \eval{\pdv{U_{k_1}[\varphi, \psi]}{\nu}}_{+} = 0$.
  This can be reduced to $U_{k_0}[\varphi, \psi]|_{-} + \alpha \eval{\pdv{U_{k_0}[\varphi, \psi]}{\nu}}_{-} = 0$
  using \eqref{eq:zero_w_trace}.
  Then, 
  $U_{k_0}[\varphi, \psi](x)$ is identically zero for $x \in \Omega_1$
  since the homogeneous interior impedance problem of the Helmholtz equation
  \begin{align}
    \begin{dcases}
      \Delta U_{k_0}[\varphi, \psi] + k_{0}^{2} U_{k_0}[\varphi, \psi] = 0, \quad x \in \Omega_1 \\
      U_{k_0}[\varphi, \psi]|_{-} + \alpha \eval{\pdv{U_{k_0}[\varphi, \psi]}{\nu}}_{-} = 0, \quad x \in \Gamma,
    \end{dcases}
    \label{eq:bvp_impedance}
  \end{align}
  has only the trivial solution.
  We therefore obtain
  \begin{gather}
    U_{k_0}[\varphi, \psi]|_{-} = \eval{\pdv{U_{k_0}[\varphi, \psi]}{\nu}}_{-} = 0. \label{eq:zero_v_trace}
  \end{gather}

  Furthermore, $U_{k_0}[\varphi, \psi] = 0$ in $\Omega_0$ and $U_{k_1}[\varphi, \psi] = 0$ in $\Omega_1$ are established via
  the unique solvability of the transmission problems.
  We can see from \eqref{eq:potv_trace} and \eqref{eq:potv_dev_trace} that
  $ U_{k_0}[\varphi, \psi]|_{+} -  U_{k_0}[\varphi, \psi]|_{-} = \varphi$, 
  $\eval{\pdv{U_{k_0}[\varphi, \psi]}{\nu}}_{+} - \eval{\pdv{U_{k_0}[\varphi, \psi]}{\nu}}_{-} = \varepsilon_{0} \psi$.
  Similarly, we can see from \eqref{eq:potw_trace} and \eqref{eq:potw_dev_trace} that
  $U_{k_1}[\varphi, \psi]|_{+} -  U_{k_1}[\varphi, \psi]|_{-} = -\varphi$,
  $\eval{\pdv{U_{k_1}[\varphi, \psi]}{\nu}}_{+} - \eval{\pdv{U_{k_1}[\varphi, \psi]}{\nu}}_{-} = -\varepsilon_{1} \psi$.
  Applying \eqref{eq:zero_v_trace} and \eqref{eq:zero_w_trace} to these relations,
  we obtain the transmission conditions.
  In summary, we have the homogeneous transmission problem
  \begin{align}
    \begin{dcases}
      \Delta U_{k_0}[\varphi, \psi] + k_{0}^{2} U_{k_0}[\varphi, \psi] = 0, \quad x \in \Omega_0, \\
      \Delta U_{k_1}[\varphi, \psi] + k_{1}^{2} U_{k_1}[\varphi, \psi] = 0, \quad x \in \Omega_1, \\
      U_{k_0}[\varphi, \psi]|_{+} - U_{k_1}[\varphi, \psi]|_{-} = 0, \quad x \in \Gamma, \\
      \frac{1}{\varepsilon_0} \eval{\pdv{U_{k_0}[\varphi, \psi]}{\nu}}_{+} - \frac{1}{\varepsilon_1} \eval{\pdv{U_{k_1}[\varphi, \psi]}{\nu}}_{-} = 0, \quad x \in \Gamma, \\
      U_{k_0}[\varphi, \psi](x) \text{ satisfies the outgoing radiation condition},
    \end{dcases}
  \end{align}
  which has only the trivial solution $U_{k_0}[\varphi, \psi] = 0$ in $\Omega_0$ and $U_{k_1}[\varphi, \psi] = 0$ in $\Omega_1$.
  Therefore, $\varphi = 0$ and $\psi = 0$.
\end{proof}

\begin{theorem}
  Under the assumptions in Section \ref{sec:problem}, the weakly singular BM equation \eqref{eq:wsbm} is well-posed.
\end{theorem}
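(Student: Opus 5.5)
The plan is to write $A = Z - B$ on $X = C(\Gamma)\times C(\Gamma)$, where $Z$ is a bounded operator with bounded inverse and $B$ is compact. Then Theorem \ref{thm:bdd}, together with the injectivity from Lemma \ref{thm:injective}, yields a bounded inverse $A^{-1}$. Hence $Ax=b$ has a unique solution depending continuously on $b$, which is well-posedness. Concretely, collect the identity terms into the block-diagonal operator
\begin{equation}
  Z = \mqty( I/2 & 0 \\ 0 & -\frac{\alpha(\varepsilon_0+\varepsilon_1)}{2} I ).
\end{equation}
This is a multiple of the identity in each component and is invertible with bounded inverse, because $\alpha = i/k_0 \neq 0$ and $\varepsilon_0+\varepsilon_1>0$. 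The remainder $B = Z - A$ is then the operator matrix whose entries are $-D_{k_1}$, $\varepsilon_1 S_{k_1}$, and
\begin{equation}
  -D_{k_0} + I/2 - \alpha(N_{k_0}-N_{k_1}), \qquad \varepsilon_0 S_{k_0} + \alpha(\varepsilon_0 D_{k_0}^* - \varepsilon_1 D_{k_1}^*).
\end{equation}

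There is a snag here: the $(2,1)$ entry contains $-I/2$, which is not compact. So this choice of $Z$ must be modified to a lower block-triangular one,
\begin{equation}
  Z = \mqty( I/2 & 0 \\ -I/2 & -\frac{\alpha(\varepsilon_0+\varepsilon_1)}{2} I ).
\end{equation}
Its inverse is again block lower triangular with bounded entries:
\begin{equation}
  Z^{-1} = \mqty( 2I & 0 \\ -\frac{2}{\alpha(\varepsilon_0+\varepsilon_1)} I & -\frac{2}{\alpha(\varepsilon_0+\varepsilon_1)} I ).
\end{equation}
I will verify $ZZ^{-1}=Z^{-1}Z=I$ directly; this is a routine computation. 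With this $Z$, every entry of $B = Z - A$ is a linear combination of $S_{k_j}$, $D_{k_j}$, $D^*_{k_j}$ and $N_{k_0}-N_{k_1}$.

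Next, note that case (i) of Lemma \ref{thm:injective} has $\varepsilon_0=\varepsilon_1$ and hence $k_0=k_1$, so $N_{k_0}-N_{k_1}=0$. Otherwise $k_0\neq k_1$, and Theorem \ref{thm:mapping} gives compactness of $N_{k_0}-N_{k_1}$ on $C(\Gamma)$. In either case every entry of $B$ is compact on $C(\Gamma)$ by Theorem \ref{thm:mapping}. A matrix of compact operators is compact on the product space, since each component of $B$ is a finite sum of compact maps composed with bounded projections and injections. Therefore $B$ is compact on $X$.

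Finally, $Z-B = A$ is injective by Lemma \ref{thm:injective}, so Theorem \ref{thm:bdd} gives that $A^{-1}$ exists and is bounded on $X$. To obtain existence I need surjectivity, not just a bounded left inverse. For this I would invoke the Riesz theory underlying Theorem \ref{thm:bdd}: $A = Z(I - Z^{-1}B)$ with $Z^{-1}B$ compact, and an injective operator of the form $I-K$ with $K$ compact is surjective. Hence $A$ is bijective with bounded inverse. Since $b\in X$ (because $u^{\mathrm{in}}$ is analytic), the solution $x=A^{-1}b$ exists, is unique, and satisfies $\|x\|\le \|A^{-1}\|\,\|b\|$.

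The main obstacle is choosing the splitting so that the noncompact $-I/2$ in the $(2,1)$ block is absorbed into an invertible $Z$, together with the degenerate equal-wavenumber case for $N_{k_0}-N_{k_1}$. Both are handled as described above.
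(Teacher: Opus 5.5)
Your proposal is correct and is essentially the paper's proof: it uses the same lower block-triangular splitting $A = S - K$ (your $Z$ is the paper's $S$, with the same inverse), gets compactness of the remainder from Theorem~\ref{thm:mapping}, and applies Theorem~\ref{thm:bdd} together with Lemma~\ref{thm:injective}. Your explicit remarks, that $N_{k_0}-N_{k_1}=0$ when $\varepsilon_0=\varepsilon_1$ (where Theorem~\ref{thm:mapping}, stated for $k\neq k'$, does not apply directly) and that surjectivity comes from the Riesz theory behind Theorem~\ref{thm:bdd}, are minor clarifications that the paper leaves implicit.
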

\begin{proof}
Let $S : X \to X$ be a bounded linear operator, and $K : X \to X$ be a compact linear operator such that $A = S - K$. Namely, $S$ and $K$ are expressed as,
\begin{equation}
  S = \mqty(
   I/2 & \\
   - I/2 & - \frac{\alpha(\varepsilon_0 + \varepsilon_1)}{2}
  ), \quad
  K =
  - \mqty(
  D_{k_1}  & -\varepsilon_1 S_{k_1} 
  \\
  D_{k_0} + \alpha \qty(N_{k_0} - N_{k_1}) & - \varepsilon_0 S_{k_0} -\alpha \qty(\varepsilon_0 D_{k_0}^* - \varepsilon_1 D_{k_1}^*) \\
  ),
\end{equation}
where the compactness of $K$ follows from Theorem \ref{thm:mapping}.
Clearly, $S$ is invertible and $S^{-1} : X \to X$ is given by,
\[
S^{-1} = \mqty(
  2I & 0 \\
  - \frac{2}{\alpha(\varepsilon_0 + \varepsilon_1)} I & - \frac{2}{\alpha(\varepsilon_0 + \varepsilon_1)} I
).
\]
Since $S^{-1}$ is bounded and $S - K$ is injective by Lemma \ref{thm:injective}, $A^{-1} = (S - K)^{-1}$ exists and is bounded from Theorem \ref{thm:bdd},
which implies that the weakly singular BM equation is well-posed.
\end{proof}

\begin{cor} \label{thm:solvability}
  Assume that the conditions specified in Section \ref{sec:problem} hold, and let $0 < \beta \le 1$.
  Then, if $u$ and $q$ are the solutions of the weakly singular BM equation \eqref{eq:wsbm},
  $U_{k_0}[u, q]$ and $U_{k_1}[u, q]$ solve the transmission problem \eqref{eq:problem}.
\end{cor}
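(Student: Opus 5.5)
Since the theorem above gives a unique solution $(u,q)\in X$ of \eqref{eq:wsbm}, I will argue in the spirit of Lemma \ref{thm:injective}, now with the inhomogeneous right-hand side $b$. I will first prove regularity, then show the potentials vanish in the complementary regions, and finally read off the transmission conditions from the jump relations.

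\textbf{Regularity.} I would rewrite the equation as $x = S^{-1}Kx + S^{-1}b$. By Theorem \ref{thm:mapping}, $K$ maps $C(\Gamma)\times C(\Gamma)$ into $C^{0,\beta}(\Gamma)\times C^{0,\beta}(\Gamma)$, and $b$ is smooth because $u^{\mathrm{in}}$ is analytic. Since $S^{-1}$ only involves constant multiples of $I$, this gives $u,q\in C^{0,\beta}(\Gamma)$.

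To bootstrap $u$ to $C^{1,\beta}(\Gamma)$, I use the first row: $u = -2D_{k_1}u + 2\varepsilon_1 S_{k_1}q$. Because $D_{k_1}$ and $S_{k_1}$ map $C^{0,\beta}$ into $C^{1,\beta}$ by Theorem \ref{thm:mapping}, we get $u\in C^{1,\beta}(\Gamma)$. With this regularity, the potentials $U_{k_j}[u,q]$ belong to $C^2(\Omega_j)\cap C^1(\overline\Omega_j)$ by the standard results for double-layer potentials with $C^{1,\beta}$ density and single-layer potentials with H\"older density \cite{colton1983integral}. The jump relations \eqref{eq:potw_trace}--\eqref{eq:potv_dev_trace} then hold, and $N_{k_0}u$ and $N_{k_1}u$ make sense separately.

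\textbf{Vanishing in complementary regions.} The first row states $U_{k_1}[u,q]|_+=0$. As in the lemma, uniqueness for the exterior Dirichlet problem \eqref{eq:bvp_diriclet} gives $U_{k_1}[u,q]=0$ in $\Omega_0$, so \eqref{eq:zero_w_trace} holds.

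For the second row, I define $W=U_{k_0}[u,q]+u^{\mathrm{in}}$ in $\Omega_1$. The second row reads
\[
U_{k_0}|_- +\alpha\,\partial_\nu U_{k_0}|_- +\alpha\,\partial_\nu U_{k_1}|_+ = -u^{\mathrm{in}}-\alpha\,\partial_\nu u^{\mathrm{in}}.
\]
Using \eqref{eq:zero_w_trace}, this becomes $W|_-+\alpha\,\partial_\nu W|_-=0$. Since $W$ solves the Helmholtz equation with wavenumber $k_0$ in $\Omega_1$, uniqueness for the interior impedance problem \eqref{eq:bvp_impedance} gives $U_{k_0}[u,q]=-u^{\mathrm{in}}$ in $\Omega_1$.

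The main obstacle is this sign and trace bookkeeping: one must check that the second row of $A$ is exactly this combination of traces. This is the same computation as in Lemma \ref{thm:injective}, so it carries over directly.

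\textbf{Transmission conditions.} Set $u_0^{\mathrm{sc}}=U_{k_0}[u,q]$ in $\Omega_0$ and $u_1^{\mathrm{sc}}=U_{k_1}[u,q]$ in $\Omega_1$. The Helmholtz equations and the radiation condition hold by construction of the potentials. From the jumps,
\[
u_0^{\mathrm{sc}}|_+ = U_{k_0}|_- + u = -u^{\mathrm{in}}+u, \qquad u_1^{\mathrm{sc}}|_- = U_{k_1}|_+ + u = u,
\]
so $u_0^{\mathrm{sc}}+u^{\mathrm{in}}=u=u_1^{\mathrm{sc}}$ on $\Gamma$. Similarly,
\[
\partial_\nu u_0^{\mathrm{sc}}|_+ = -\partial_\nu u^{\mathrm{in}}+\varepsilon_0 q, \qquad \partial_\nu u_1^{\mathrm{sc}}|_- = \varepsilon_1 q.
\]
Dividing by $\varepsilon_0$ and $\varepsilon_1$ respectively gives the flux condition with common value $q$. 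Hence $U_{k_0}[u,q]$ and $U_{k_1}[u,q]$ solve \eqref{eq:problem}.
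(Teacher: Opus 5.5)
Your proposal is correct and follows essentially the same route as the paper. It bootstraps H\"older regularity via Theorem~\ref{thm:mapping}; your use of $S^{-1}$ is exactly the paper's substitution of the first row into the second. It then uses exterior Dirichlet and interior impedance uniqueness to get $U_{k_1}[u,q]=0$ in $\Omega_0$ and $U_{k_0}[u,q]=-u^{\mathrm{in}}$ in $\Omega_1$, and reads the transmission conditions off the jump relations. You are in fact more careful than the paper: you state the intermediate step $u\in C^{0,\beta}(\Gamma)$ before reaching $C^{1,\beta}(\Gamma)$, you justify that the potentials are $C^1$ up to $\Gamma$, and you verify the transmission conditions (with traces $u$ and $q$) directly rather than appealing to uniqueness of the transmission problem.
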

\begin{proof}
  As the solution of \eqref{eq:wsbm}, we have $u, q \in C(\Gamma)$.
  From the first and second rows of \eqref{eq:wsbm},
  the following hold:
  \begin{align}
    \tfrac{1}{2} u &= - D_{k_1} u + \varepsilon_1 S_{k_1} q \label{eq:1st} \\
    - \tfrac{1}{2}u - \tfrac{\alpha(\varepsilon_0 + \varepsilon_1)}{2} q &= -D_{k_0}u - \alpha \qty(N_{k_0} - N_{k_1})u + \varepsilon_0 S_{k_0}q + \alpha \qty(\varepsilon_0 D_{k_0}^* + \varepsilon_1 D_{k_1}^*) q - u^{\mathrm{in}} - \alpha \tfrac{\partial u^{\mathrm{in}}}{\partial \nu} \label{eq:2nd}
  \end{align}
Substituting \eqref{eq:1st} into \eqref{eq:2nd}, we have 
\begin{align}
  - \tfrac{\alpha(\varepsilon_0 + \varepsilon_1)}{2} q = -D_{k_0}u - \alpha \qty(N_{k_0} - N_{k_1})u - D_{k_1} u + \varepsilon_0 S_{k_0}q + \alpha \qty(\varepsilon_0 D_{k_0}^* + \varepsilon_1 D_{k_1}^*) q + \varepsilon_1 S_{k_1} q - u^{\mathrm{in}} - \alpha \tfrac{\partial u^{\mathrm{in}}}{\partial \nu}. \label{eq:3rd}
\end{align}
Since $ (u^{\mathrm{in}} + \alpha \pdv{u^{\mathrm{in}}}{\nu}) \in C^{0, \beta}(\Gamma)$ at least,
 and owing to the mapping properties of Theorem \ref{thm:mapping},
 the right-hand side of \eqref{eq:3rd} belongs to $C^{0, \beta}(\Gamma)$.
Consequently, we have $q \in C^{0, \beta}(\Gamma)$, and from \eqref{eq:1st},
 we then deduce $u \in C^{1, \beta}(\Gamma) \subset C^1(\Gamma)$.

Analogously to Lemma \ref{thm:injective},
applying the potentials $U_{k_0}[u, q](x)$ and $U_{k_1}[u, q](x)$, defined by \eqref{eq:potential},
to the weakly singular BM equation \eqref{eq:wsbm} yields
$ U_{k_1}[u, q]|_{+} = \eval{\pdv{U_{k_1}[u, q]}{\nu}}_{+} = 0$
and $\qty(U_{k_0}[u, q]|_{-} + u^{\mathrm{in}}) = \qty( \eval{\pdv{U_{k_0}[u, q]}{\nu}}_{-} + \pdv{u^{\mathrm{in}}}{\nu} ) = 0$.
These relations follow from the unique solvability of the homogeneous exterior Dirichlet and homogeneous interior impedance problems.
Combining these relations with the jump relations $ U_{k_0}[u, q]|_{+} - U_{k_0}[u, q]|_{-} = u$,
$\eval{\pdv{U_{k_0}[u, q]}{\nu}}_{+} - \eval{\pdv{U_{k_0}[u, q]}{\nu}}_{-} = \varepsilon_{0} q$,
$U_{k_1}[u, q]|_{+} -  U_{k_1}[u, q]|_{-} = -u$, and
$\eval{\pdv{U_{k_1}[u, q]}{\nu}}_{+} - \eval{\pdv{U_{k_1}[u, q]}{\nu}}_{-} = -\varepsilon_{1} q$,
we see that $U_{k_0}[u, q](x)$ and $U_{k_1}[u, q](x)$ solve the following transmission problem:
  \begin{align}
    \begin{dcases}
      (\Delta + k_{j}^{2} ) U_{k_j}[u, q] = 0, \quad x \in \Omega_j, \quad j = 0, 1, \\
      U_{k_0}[u, q]|_{+} - U_{k_1}[u, q]|_{-} = -u^{\mathrm{in}}, \quad x \in \Gamma, \\
      \frac{1}{\varepsilon_0} \eval{\pdv{U_{k_0}[u, q]}{\nu}}_{+} - \frac{1}{\varepsilon_1} \eval{\pdv{U_{k_1}[u, q]}{\nu}}_{-} = -\frac{1}{\varepsilon_0} \pdv{u^{\mathrm{in}}}{\nu}, \quad x \in \Gamma, \\
      U_{k_0}[u, q](x) \text{ satisfies the outgoing radiation condition}.
    \end{dcases}
  \end{align}
  The uniqueness of the solution of an inhomogeneous transmission problem \cite{kress1977transmission}
ensures that this problem is equivalent to the original transmission problem \eqref{eq:problem}.
\end{proof}

We finally mention that the fictitious eigenvalues of \eqref{eq:wsbm} are different from those of a transmission problem.
\begin{cor} \label{thm:eigenvalues}
  Assume that the conditions specified in Section \ref{sec:problem} hold, and extend the parameter $\omega > 0$ to $\omega \in \{ a \in \mathbb{C} \mid \Re(a) \ge 0, a \neq 0 \} $. Then, the weakly singular BM equation \eqref{eq:wsbm} has the same fictitious eigenvalues as the standard BM equation \eqref{eq:bm}. In particular, these fictitious eigenvalues with respect to $\omega$ coincide with the eigenvalues of the following boundary value problems:
  \begin{itemize}
    \item the homogeneous exterior Dirichlet problem \eqref{eq:bvp_diriclet} (with $k_1 = \omega \sqrt{\varepsilon_1}$), and
    \item the homogeneous interior impedance problem \eqref{eq:bvp_impedance} (with $k_0 = \omega \sqrt{\varepsilon_0}$ and $\alpha$).
  \end{itemize}
\end{cor}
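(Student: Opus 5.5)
The natural strategy is to characterize the fictitious eigenvalues as the values of $\omega$ for which $A$ fails to be injective, and to show the characterization is two-sided. The proof of Lemma \ref{thm:injective} already gives one inclusion. Suppose $Az=0$ has a nontrivial solution $z=(\varphi,\psi)^\top$. The argument runs as follows.

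\begin{enumerate}
\item If $\omega$ is not a Dirichlet eigenvalue (with $k_1$), then $U_{k_1}[\varphi,\psi]$ vanishes in $\Omega_0$.
\item If, in addition, $\omega$ is not an impedance eigenvalue (with $k_0$, $\alpha$), then $U_{k_0}[\varphi,\psi]$ vanishes in $\Omega_1$.
\item The resulting homogeneous transmission problem then forces $\varphi=\psi=0$.
\end{enumerate}

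This last step needs care. For complex $\omega$ the transmission problem itself may have eigenvalues, so I would avoid invoking its uniqueness. Instead I would use the jump relations directly. Since $U_{k_1}|_{+}=\partial_\nu U_{k_1}|_{+}=0$ and $U_{k_0}|_{-}=\partial_\nu U_{k_0}|_{-}=0$, the jumps give
\[
\varphi=U_{k_0}|_{+},\quad \varepsilon_0\psi=\partial_\nu U_{k_0}|_{+},\quad \varphi=U_{k_1}|_{-},\quad \varepsilon_1\psi=\partial_\nu U_{k_1}|_{-}.
\]
It remains to show these quantities vanish. My plan is to check whether the paper's definition of fictitious eigenvalue means exactly the $\omega$ at which $A$ is not injective, excluding genuine transmission eigenvalues. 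If so, step 3 is precisely where transmission eigenvalues separate from fictitious ones, and the claim reduces to showing that non-injectivity of $A$ away from true transmission eigenvalues forces $\omega$ to be a Dirichlet or impedance eigenvalue. The case $\varepsilon_0=\varepsilon_1$ reduces, as in part (i) of Lemma \ref{thm:injective}, to the combined-field operator $\alpha D_{k_0}+\alpha/2+S_{k_0}$. Its kernel is known to be tied to exactly these two problems.

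For the converse inclusion, I would construct nontrivial kernel elements.

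\emph{Dirichlet eigenvalue.} If $\omega$ is an exterior Dirichlet eigenvalue with eigenfunction $w$, I would take $(\varphi,\psi)$ so that $U_{k_1}[\varphi,\psi]=w$ in $\Omega_0$ and $U_{k_1}=0$ in $\Omega_1$. This amounts to choosing $\varphi=-w|_{+}=0$ and $\varepsilon_1\psi=-\partial_\nu w|_{+}$. The first row of $A$ then vanishes. For the second row, I would need to check that the chosen data also annihilate the combination $U_{k_0}|_{-}+\alpha\partial_\nu U_{k_0}|_{-}+\alpha\partial_\nu U_{k_1}|_{+}$. This combination includes the nonzero term $\partial_\nu w|_{+}$, so the choice is not automatic. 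I would adjust by adding a second density pair $(\varphi',\psi')$ and solving a Fredholm problem: since $S$ is invertible and $K$ is compact, the Fredholm alternative applies.

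\emph{Impedance eigenvalue.} Symmetrically, if $\omega$ is an impedance eigenvalue with eigenfunction $v$ in $\Omega_1$, I would seek densities with $U_{k_0}=v$ in $\Omega_1$.

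\emph{Comparison with the standard BM equation.} To show the fictitious eigenvalues agree with those of \eqref{eq:bm}, I would observe that \eqref{eq:bm} is obtained from \eqref{eq:wsbm} by an invertible row operation. Adding $\alpha$ times the normal-derivative trace relation $-N_{k_1}\varphi+\varepsilon_1(D^*_{k_1}-\tfrac12)\psi$ converts one form into the other on the kernel. Equivalently, both kernels are characterized by the same pair of conditions, $U_{k_1}\equiv0$ in $\Omega_0$ and $U_{k_0}|_{-}+\alpha\partial_\nu U_{k_0}|_{-}=0$. I would verify this by running the injectivity argument for \eqref{eq:bm} in parallel: its second row gives $U_{k_1}|_{-}$-type data and its first row gives the impedance condition.

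The main obstacle is the converse direction, namely producing kernel elements at Dirichlet and impedance eigenvalues. The two-row coupling means a single eigenfunction need not directly yield a kernel vector. I would first try a Fredholm index-zero argument: $A=S-K$ has index zero, so non-injectivity is equivalent to non-surjectivity. I would then exhibit a right-hand side $b$ that is not in the range, using Green's identities paired with the eigenfunction.
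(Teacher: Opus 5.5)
Your first half (non-injectivity of $A$ away from resonances of the transmission problem forces $\omega$ to be an exterior Dirichlet or interior impedance eigenvalue) is exactly what the paper's one-line proof invokes. Your reading of ``fictitious'' as excluding transmission eigenvalues is the right one for complex $\omega$. You are also right that this gives neither the converse nor the comparison with \eqref{eq:bm}; the paper's proof is silent on both. However, your comparison argument is wrong, and your converse argument is circular.

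\textbf{The comparison.} \eqref{eq:bm} is not obtained from \eqref{eq:wsbm} by an invertible row operation. Nor are both kernels described by ``$U_{k_1}\equiv 0$ in $\Omega_0$ and $U_{k_0}|_{-}+\alpha\partial_\nu U_{k_0}|_{-}=0$''. The second row of \eqref{eq:wsbm} is the first row of \eqref{eq:bm} plus $\alpha\,\partial_\nu U_{k_1}[\varphi,\psi]|_{+}$. This term is not a multiple of the remaining row $-U_{k_1}[\varphi,\psi]|_{+}$. It vanishes on the kernel only if $U_{k_1}|_{+}=0$ forces $U_{k_1}\equiv 0$ in $\Omega_0$, i.e.\ precisely when $\omega$ is \emph{not} a Dirichlet eigenvalue. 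At a Dirichlet eigenvalue, nonzero kernel elements have $U_{k_1}|_{\Omega_0}$ equal to a nonzero eigenfunction $w$; your own converse construction relies on this. There the kernels genuinely differ: for \eqref{eq:bm} one has $(U_{k_0}+\alpha\partial_\nu U_{k_0})|_{-}=0$, while for \eqref{eq:wsbm} one has $(U_{k_0}+\alpha\partial_\nu U_{k_0})|_{-}=-\alpha\partial_\nu w|_{+}\neq 0$. So nothing transfers between the two equations. The characterization (Dirichlet $\cup$ impedance eigenvalues, off resonances) must be proved for each operator separately. Incidentally, the second row of \eqref{eq:bm} is the exterior trace $-U_{k_1}|_{+}$, not $U_{k_1}|_{-}$ data.

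\textbf{The converse.} The equation $A(z_0+z')=0$ for the correction $z'$ has solution set $-z_0+\ker A$. It therefore yields a nonzero $z_0+z'$ exactly when $\ker A\neq\{0\}$, which is what you want to prove. Invertibility of $S$ and compactness of $K$ add nothing.

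The missing idea is to prescribe the fields rather than the densities. Given $w$, proceed as follows.
\begin{enumerate}
\item Solve the inhomogeneous interior impedance problem $(v+\alpha\partial_\nu v)|_{-}=-\alpha\partial_\nu w|_{+}$ in $\Omega_1$. This is possible if $\omega$ is not an impedance eigenvalue.
\item Solve the inhomogeneous transmission problem for $(v_0,w_1)$ in $\Omega_0\times\Omega_1$ with $v_0|_{+}-w_1|_{-}=v|_{-}$ and $\varepsilon_0^{-1}\partial_\nu v_0|_{+}-\varepsilon_1^{-1}\partial_\nu w_1|_{-}=\varepsilon_0^{-1}\partial_\nu v|_{-}-\varepsilon_1^{-1}\partial_\nu w|_{+}$. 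This is possible off resonances.
\item Put $\varphi=w_1|_{-}$ and $\varepsilon_1\psi=\partial_\nu w_1|_{-}-\partial_\nu w|_{+}$.
\end{enumerate}
Green's representation of a piecewise solution (double-layer potential of its jump minus single-layer potential of its normal-derivative jump) then gives $U_{k_1}[\varphi,\psi]=w$ in $\Omega_0$ and $U_{k_0}[\varphi,\psi]=v$ in $\Omega_1$. This is a nonzero kernel element. At an impedance eigenvalue, take $w=0$ and $v$ the eigenfunction. For \eqref{eq:bm}, replace the impedance data by $0$.

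Your index-zero fallback also works, but only once the annihilator is named. $U_{k_1}|_{\Omega_0}$ is outgoing for every density pair, so Green's second identity gives $\int_\Gamma U_{k_1}|_{+}\,\partial_\nu w\,\mathrm{d}s=\int_\Gamma w|_{+}\,\partial_\nu U_{k_1}|_{+}\,\mathrm{d}s=0$. Hence $(b_1,0)$ with $\int_\Gamma b_1\,\partial_\nu w\,\mathrm{d}s\neq 0$ is not in the range of $A$. Similarly, at an impedance eigenvalue $v$ that is not also a Dirichlet eigenvalue, $(0,b_2)$ with $\int_\Gamma b_2\,v|_{-}\,\mathrm{d}s\neq 0$ is not in the range. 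This follows from Green's identity in $\Omega_1$ together with $\partial_\nu v=-v/\alpha$.
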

\begin{proof}
  This assertion follows immediately from the proof process of Lemma \ref{thm:injective} even in the case where $\varepsilon_0 = \varepsilon_1$.
\end{proof}

\section*{Acknowledgments}
We would like to thank Mr. Hiromochi Itoh and Dr. Kei Matsushima for their valuable comments on the proof for the existence of the solution of \eqref{eq:wsbm}.
This work was supported by the JSPS KAKENHI grant number 24K20783.

  \bibliographystyle{elsarticle-num} 
  \bibliography{ref_short}

@article{matsumoto2025efficient,
  title={Efficient {LU} factorization exploiting direct-indirect {B}urton--{M}iller equation for {H}elmholtz transmission problems},
  author={Matsumoto, Yasuhiro and Matsushima, Kei},
  journal={arXiv preprint arXiv:2512.14193},
doi={10.48550/arXiv.2512.14193},
  year={2025}
}

@book{colton2013inverse,
  title={Inverse acoustic and electromagnetic scattering theory (Third Edition)},
  author={Colton, David L and Kress, Rainer},
  year={2013},
  address={New York},
  doi={10.1007/978-1-4614-4942-3},
  publisher={Springer}
}

@book{colton1983integral,
  title={Integral Equation Methods in Scattering Theory},
  author={Colton, David L and Kress, Rainer},
  year={1983},
  address={New York},
  publisher={Wiley-Interscience Publication}
}

@book{kress2014linear,
  title={Linear Integral Equations (Third Edition)},
  author={Kress, Rainer},
  year={2014},
  address={New York},
doi={10.1007/978-1-4614-9593-2},
  publisher={Springer}
}

@article{kress1977transmission,
    author = {Kress, R. and Roach, G. F.},
    title = {Transmission problems for the {H}elmholtz equation},
    journal = {J. Math. Phys.},
    volume = {19},
    number = {6},
    pages = {1433-1437},
    year = {1978},
    month = {06},
    issn = {0022-2488},
    doi = {10.1063/1.523808},
}

@article{hiptmair2022spurious,
  title={Spurious quasi-resonances in boundary integral equations for the {H}elmholtz transmission problem},
  author={Hiptmair, Ralf and Moiola, Andrea and Spence, Euan A},
  journal={SIAM J. Appl. Math.},
  volume={82},
  number={4},
  pages={1446--1469},
  year={2022},
doi={10.1137/21M1447052},
  publisher={SIAM}
}

@article{CHEN1998529,
title = {On fictitious frequencies using dual series representation},
journal = {Mech. Res. Commun.},
volume = {25},
number = {5},
pages = {529-534},
year = {1998},
issn = {0093-6413},
doi = {10.1016/S0093-6413(98)00069-X},
author = {J.T. Chen}
}

@article{ROKHLIN1983257,
title = {Solution of acoustic scattering problems by means of second kind integral equations},
journal = {Wave Motion},
volume = {5},
number = {3},
pages = {257-272},
year = {1983},
issn = {0165-2125},
doi = {https://doi.org/10.1016/0165-2125(83)90016-1},
author = {V. Rokhlin},
}

@book{muller1969foundations,
  title={Foundations of the Mathematical Theory of Electromagnetic Waves},
  author={Claus Müller},
  year={1969},
  address= "Heidelberg",
doi={10.1007/978-3-662-11773-6},
  publisher={Springer Berlin}
}

@book{chew2001fast,
  title={Fast and efficient algorithms in computational electromagnetics},
  author={Chew, Weng Cho and Michielssen, Eric Muscle, JM and Jin, Jian-Ming},
  year={2001},
  address		= "USA",
  publisher={Artech House, Inc.}
}

@article{burton1971application,
  title={The application of integral equation methods to the numerical solution of some exterior boundary-value problems},
  author={Burton, A.J. and Miller, G.F.},
  journal={Proc. R. Soc. Lond. A},
  volume={323},
  number={1553},
  pages={201--210},
  year={1971},
  doi={10.1098/rspa.1971.0097},
  publisher={The Royal Society London}
}

@article{kleinman1988single,
  title={On single integral equations for the transmission problem of acoustics},
  author={Kleinman, RE and Martin, PA},
  journal={SIAM J. Appl. Math.},
  volume={48},
  number={2},
  pages={307--325},
  year={1988},
  publisher={SIAM},
doi={10.1137/0148016},
}

@article{stefanov2000resonances,
  title={Resonances near the real axis imply existence of quasimodes},
  author={Stefanov, Plamen},
  journal={C. R. Acad. Sci. Ser. I},
  volume={330},
  number={2},
  pages={105--108},
  year={2000},
doi={10.1016/S0764-4442(00)00105-1},
  publisher={Elsevier}
}

@article{misawa2017boundary,
  title={Boundary integral equations for calculating complex eigenvalues of transmission problems},
  author={Misawa, Ryota and Niino, Kazuki and Nishimura, Naoshi},
  journal={SIAM J. Appl. Math.},
  volume={77},
  number={2},
  pages={770--788},
  year={2017},
  doi={10.1137/16M1087436},
  publisher={SIAM}
}

@article{MISAWA201912-191201,
  title={A boundary integral equation suitable for the {N}ystr\"om method having the same complex eigenvalues as the {B}urton--{M}iller formulation for the {H}elmholtz equation in 2{D}},
  author={Ryota Misawa and Naoshi Nishimura},
  journal={Trans. Jpn. Soc. Comput. Methods Eng.},
  volume={19},
  number={ },
  pages={61-66},
  year={2019},
  doi={10.60443/jascome.12-191201},
note={in Japanese}
}

@article{tomoyasu2026point,
  title={Point {J}acobi-type preconditioning and parameter tuning for {C}alderon-preconditioned {B}urton--{M}iller method in transmission problems},
  author={Tomoyasu, Keigo and Isakari, Hiroshi},
  journal={Eng. Anal. Bound. Elem.},
  volume={183},
  pages={106572},
  year={2026},
doi={10.1016/j.enganabound.2025.106572},
  publisher={Elsevier}
}

@article{matsumoto2026fast,
  title={A fast direct solver for two-dimensional transmission problems of elastic waves},
  author={Matsumoto, Yasuhiro and Maruyama, Taizo},
  journal={Eng. Comput.},
  volume={42},
  number={68},
  year={2026},
doi={10.1007/s00366-026-02302-8},
  publisher={Springer}
}

\end{document}